\documentclass[10pt,A4,conference]{IEEEtran}
\usepackage{geometry}
 \geometry{
left=15mm,
right=15mm,
 top=18mm,
 }
  \usepackage{authblk}
 \usepackage{tikz}  
\usetikzlibrary{calc,intersections}
 \usepackage{verbatim}
\usepackage{textcomp}
\usepackage[T1]{fontenc}
\usepackage{bbm}
\usepackage{multirow}
\usepackage{dsfont}
\usepackage{cite}
\usepackage{epsfig}
\usepackage{float}
\usepackage{enumerate}
\usepackage{balance}
\usepackage{color}
\usepackage{subcaption}
\usepackage{caption}
\captionsetup{font=scriptsize,labelfont=scriptsize}
\usepackage{algorithm}
\usepackage[noend]{algpseudocode}
\algrenewcommand\Return{\State \algorithmicreturn{} } 
\usepackage{hyperref}
\hypersetup{
    colorlinks=true,
    linkcolor=blue,
    filecolor=magenta,      
    urlcolor=cyan,
		citecolor=blue,
}
\usepackage{amssymb}
\usepackage{amsthm}
\usepackage{array}
\usepackage{graphicx}
\usepackage{lettrine}
\usepackage{epsf} 
\usepackage{psfrag}
\usepackage[usenames,dvipsnames]{pstricks}
\usepackage{pst-grad} 
\usepackage{pst-plot} 
\usepackage[space]{grffile} 
\usepackage{etoolbox} 
\usepackage{url} 
\usepackage{algcompatible}
\usepackage{multirow}

\newtheorem{theorem4}{Theorem}
\newtheorem{lemma}[theorem4]{Lemma}

\usepackage[ short ]{optidef}

\usepackage{cuted}
\setlength\stripsep{3pt plus 1pt minus 1pt}

\begin{document}

\title{Ensemble DNN for Age-of-Information Minimization in UAV-assisted Networks }

\author[]{Mouhamed Naby Ndiaye}
\author[]{El Houcine Bergou}
\author[]{Hajar El Hammouti}
\affil[]{College of Computing, Mohammed VI Polytechnic University (UM6P), Benguerir, Morocco, \authorcr
emails: {\{naby.ndiaye,elhoucine.bergou,hajar.elhammouti\}@um6p.ma}}


\maketitle

\begin{abstract}
This paper addresses the problem of Age-of-Information (AoI) in UAV-assisted networks. Our objective is to minimize the expected AoI across devices by optimizing UAVs' stopping locations and device selection probabilities. To tackle this problem, we first derive a closed-form expression of the expected AoI that involves the probabilities of selection of devices. Then, we formulate the problem as a non-convex minimization subject to quality of service constraints. Since the problem is challenging to solve, we propose an Ensemble Deep Neural Network (EDNN) based approach which takes advantage of the dual formulation of the studied problem. Specifically, the Deep Neural Networks (DNNs) in the ensemble are trained in an unsupervised manner using the Lagrangian function of the studied problem. Our experiments show that the proposed EDNN method outperforms traditional DNNs in reducing the expected AoI, achieving a remarkable reduction of $29.5\%$. 

\end{abstract}

\IEEEoverridecommandlockouts
\begin{IEEEkeywords}
Age-of-Information, DNN, Ensemble DNN, Trajectory optimization, UAV-assisted networks, Unsupervised Learning.
\end{IEEEkeywords}
\IEEEpeerreviewmaketitle


\section{Introduction}

Over the past few years, there has been a significant surge in research around the concept of Age of Information (AoI). This interest is driven by various network applications that require timely information to carry out some specific tasks. Examples of such applications include providing real-time traffic to smartphone users and delivering status updates to smart systems~\cite{yates2021age,kadota2021age}. For such applications, the AoI is an important metric as it measures the freshness of the data and evaluates how quickly the data update reaches the destination~\cite{yang2020age,wang2020priority}.

In this paper, we are interested in the scenario where a set of unmanned aerial vehicles (UAVs) is deployed to gather time-sensitive data and send it to a server for analysis and decision-making~\cite{bajracharya20226g,wei2022uav}. To this end, the UAVs should dynamically adjust their trajectories and strategically select the subsets of users from whom data is collected so that the AoI is minimized. Specifically, we answer the question: what is the optimal frequency (or equivalently, the probability to select users) at which UAVs should visit and gather data from devices, and what are the optimal locations of UAVs over time so that the global AoI is minimized?

\subsection{Related work}
Minimizing the AoI in UAV-assisted networks is a daunting task. First, the dynamic movements of UAVs which are often constrained by limited energy resources make the optimization problem a challenging task. Second, the distribution of IoT devices and users across the target area can be uneven, which makes balancing data collection to minimize AoI across all users a complex problem. 

Recently, many works have investigated the AoI minimization in UAV-assisted networks. In~\cite{abd2018average}, the authors minimize the peak of AoI between source-destination pairs. To this end, the authors simultaneously optimize the UAV's flight trajectory and service time for packet transmissions. To solve the problem, they propose an iterative approach where the initial optimization is divided into sub-problems. Each sub-problem is solved analytically and a closed-form expression of the sub-solution is provided. Similarly, 
in~\cite{gao2023aoi}, the problem of the average peak of AoI is divided into two sub-problems. First, a clustering algorithm is proposed to determine the locations of data collection points. Then, the collection points are grouped into clusters, and finally, the flight trajectories of the UAVs are optimized using an ant colony optimization algorithm. In~\cite{ndiaye2022age}, a probabilistic approach is proposed to minimize the probabilities of associations between users and UAVs, and UAVs and the base station. The authors propose a convex reformulation of the problem, which is then solved numerically. The previously cited works propose heuristics to solve the AoI optimization. These methods suffer from several limitations. First, they do not scale well with high-dimensional variables. Additionally, their convergence time is considerably long, and they lack the ability to adapt and generalize to new setups~\cite{arulkumaran2017deep}. 

To overcome these limitations, machine learning (ML) based approaches have been proposed. In~\cite{sun2021aoi}, the authors propose a deep-learning based method to obtain an efficient solution for the flight speed and the trajectory of a single UAV that collects data from IoT devices. A similar approach is proposed in~\cite{liu2021average} where the AoI of ground users is minimized by simultaneously optimizing the trajectory of a UAV, the scheduling of information transmission, and energy harvesting for the ground users. The proposed approach uses a deep reinforcement learning (DRL) to efficiently find optimal solutions. In~\cite{zhu2022uav}, the authors tackle the problem of AoI minimization  by using a transformer network that outputs the optimal visiting order for the ground clusters. The transformer network is combined with a weighted A* algorithm that is used to determine the most suitable hovering point for each cluster. Unlike previous works which considered a single UAV setup, 
 the authors in~\cite{naby2023muti} consider a multi-UAV setup where the AoI is minimized. They introduce  a centralized multi-agent reinforcement learning approach to optimize the UAV trajectories. The proposed scheme relies on a centralized training where information about the environment is shared between UAVs, and a decentralized  execution.

Our paper presents two distinctive differences from existing works. First, instead of considering the association variables as binary, it rather deals with the probability that a UAV visits a given device. This probability can be interpreted as the frequency at which a UAV collects data from a device during its flight. Accordingly, the event of collecting data becomes stochastic,  which justifies the use of the \textit{expected AoI} as a target of our optimization problem. Second, unlike existing works, we propose a novel approach where a collection of DNNs is trained using unsupervised learning. The proposed approach guarantees accurate and robust results.

\subsection{Contribution}
In this paper, we aim to minimize the expected AoI while optimizing the stopping locations of UAVs and the probabilities of device selection. The probabilities of device selection can be interpreted as the frequencies at which UAVs visit devices during the target time. Our contributions can be summarized as follows. 

\begin{itemize}
    \item First, we provide a closed-form expression of the expected AoI of the network which involves the probabilities of selection of devices. Then, we formulate the AoI minimization problem as an optimization with quality of service constraints. 
    \item To address the studied problem, we leverage the framework of EDNNs. The EDNN is based on training a collection of DNNs. Each DNN is trained individually in an unsupervised manner. The training of DNNs relies on the primal-dual formulation of the initial optimization problem.
    \item Our simulation results show that the proposed EDNN approach outperforms traditional DNNs, leading to a reduction of $29.5\%$ of the expected AoI.
\end{itemize}

\subsection{Organization}

The remainder of the paper is organized as follows. First, the system model is described in Section~\ref{Sys}. The mathematical formulation of the problem is given in Section~\ref{Prob}. In Section~\ref{Ensemble}, we describe in details the proposed solution. Next, in Section~\ref{Simu}, we show the performance of our appraoch using simulation experiments. Finally, concluding remarks are provided in Section~\ref{Conc}.


\section{System Model}\label{Sys}
Consider a wireless network where a set $\mathcal{I}$ of $I$ IoT devices periodically generate data updates. The data is transmitted to a server located at the base station (BS). Due to the restricted communication range of IoT devices, a set $\mathcal{U}$ of UAVs is deployed to collect data updates from IoT devices at regular intervals, and then re-transmit the collected data to the BS. During each time interval $t \in \mathcal{T}\triangleq\{0,\dots,T-1\}$, an IoT device $i$ sends its data to a UAV $u$ with probability $p_{i,u}[t]$ using the air-to-ground channel. Our aim is to timely collect the generated data so that its expected age is minimized. 
\subsection{Communication Model}
To model the uplink channel between device $i$ and UAV $u$, we assume a block Rician-fading model, where the channel conditions remain constant over a time interval $t$. As a consequence, the channel response between device $i$ and UAV $u$ at time step $t$ is given by

\begin{equation*}
    h_{i,u}[t] = \sqrt{\frac{\Phi}{\Phi+1}} {\xi}^{\rm LoS}_{i,u}[t] + \sqrt{\frac{1}{\Phi+1}} {\xi_{i,u}^{\rm NLoS}}[t],
\end{equation*}
where $\Phi$ represents the Rician factor, ${\xi}^{LoS}_{iu}[t]$ is the line-of-sight (LoS) component with magnitude $\left|{\xi}^{\rm LoS}_{i,u}[t]\right| = 1$, and ${\xi}^{\rm NLoS}_{i,u}[t]$ is the random non-line-of-sight (NLoS) component following a Rayleigh distribution with mean zero and variance one. 

Let $(x_u[t], y_u[t], H_u)$ be the 3D position of UAV $u$ at time interval $t$, where $H_u$ is the altitude of UAV $u$ that is assumed fixed. Similarly, we denote by $(x_i, y_i, 0)$ the position of device $i$. 
Hence, the distance between device $i$ and UAV $u$ during time interval $t$ is given by $d_{i,u}[t]=\sqrt{(x_{u}[t] - x_{i})^2 + (y_{u}[t] - y_{i})^2 + (H_{u})^2}$. 
We assume that devices use orthogonal frequency division multiple access (OFDMA) to communicate with the UAVs. Hence, the signal-to-noise ratio (SNR) of IoT device $i$ with respect to UAV $u$ at time slot $t$ is given by

\begin{equation*}
    \Gamma_{i,u}[t] = \frac{P_{i}[t] \left|h_{i,u}[t]\right|^{2}}{\sigma^{2}d_{i,u}[t]^{2}},
\end{equation*}
where $P_i[t]$ is the transmit power of device $i$ during time interval $t$, and $\sigma^{2}$ is the variance of an additive white Gaussian noise. Accordingly, the rate of IoT device $i$ with respect to UAV $u$ during time slot $t$ can be expressed as

\begin{equation*}
    {R}_{i,u}[t] = B_{i,u}[t] \log_2\left(1 + \Gamma_{i,u}[t]\right),
\end{equation*}
where $B_{i,u}[t]$ is the allocated bandwidth between device $i$ and UAV $u$ during time slot $t$.

We assume that the generated data from IoT devices is stored in a buffer until it is collected by a UAV for transmission. We also suppose that the size of the devices' buffers is large enough to save all the generated data during the entire time span $T$.

For a successful and efficient data transmission between device $i$ and UAV $u$ during time interval $t$, the data rate $R_{i,u}[t]$ between device-UAV pair should exceed a predefined threshold denoted as $R^{\rm min}$. This threshold $R^{\rm min}$ is carefully chosen to guarantee that data updates can be transmitted almost instantaneously, ensuring rapid and reliable communication between the device and the UAV.


Throughout their flights, UAVs make stops to collect data from subsets of IoT devices. We assume that the data collection time is negligible compared to the overall flight time. We also assume that the UAVs maintain a constant speed $V$ during their flight. Consequently, the total flight time of UAV $u$, denoted as $\zeta_u$, can be expressed as:

\begin{align*}
\zeta_{u}(\!\boldsymbol{x}_u,\boldsymbol{y}_u\!)\!=\!\!
   \!\sum_{t=0}^{T-1}\!\!\frac{\sqrt{\!(\!x_{u}[\!t+1\!]\!-\!x_{u}[t]\!)^2\!+\!(\!y_{u}[\!t+1\!]\!-\!y_{u}[t]\!)^2}}{V}.
\end{align*}

\subsection{Age of Information}

The objective of this work is to optimize the UAVs' 3D locations over time jointly with the probabilities to collect data while maximizing the freshness of the data updates. In particular, our aim is to minimize the expected AoI.
The AoI is defined as the time elapsed between the last update is successfully received by the UAV. Let $\alpha_{i,u}[t]$ be the probabilistic event that UAV $u$ collects data from IoT device $i$ at time interval $t$, and let  $p_{i,u}[t]$ be the probability that $\alpha_{i,u}[t]=1$. Specifically, 

\begin{equation}
\alpha_{i, u}[t]=\left\{\begin{array}{lc}
1, & \text { with probability } p_{i,u}[t]  \\
0, & \text { with probability } 1-p_{i,u}[t].
\end{array}\right.
\label{alphauit}
\end{equation}

We define the AoI of IoT device $i$ with respect to UAV $u$ at time interval $t\geq 1$ using 
a recursive formula as follows

\begin{equation}
    A_{i,u}[t]=\left(A_{i,u}[t-1]+1\right)\left(1-\alpha_{i,u}[t]\right),
    \label{Aiu}
\end{equation}
where $A_{i,u}[0]=0$. Accordingly,  when the data updates of device $i$ are not collected during time interval $t$ (i.e., $A_{i,u}[t]=0$), the AoI is increased by one unit of time. Inversely, when the updates are transmitted, the AoI is reinitialized to zero. In this context, it is judicious to consider the expected AoI with respect to the probabilities of data collection over a number of intervals $T$. The following lemma provides a closed-form expression of the expected AoI.



\begin{lemma}
    The expected AoI $\mathbb{E}(A_{i,u}[t])$ for an IoT device $i$ associated with UAV $u$ at time step $t$ can be expressed as

\begin{equation}\label{AgeClosedForm}
\begin{aligned}
& \mathbb{E}(A_{i,u}[t])\\&= \overline{p}_{i,u}[t]\! \left(\!1 \!+ \!\overline{p}_{i,u}[t-1]\!+\!\!\sum_{k=1}^{t-1}\!\! \left(\prod_{j=1}^{k}\!\overline{p}_{i,u}[j]\overline{p}_{i,u}[j-1]\!\right)\!\!\right),
\end{aligned}
\end{equation}
where $\overline{p}_{i,u}[t] = 1 - p_{i,u}[t]$, and the expectation $\mathbb{E}(.)$ is with respect to the probabilistic event that UAV $u$ collects data from IoT $i$ at time $t$.
\end{lemma}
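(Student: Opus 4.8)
The plan is to prove the closed form by reducing the expectation of the recursively-defined random variable in~\eqref{Aiu} to a deterministic linear recurrence, solving that recurrence, and then regrouping the result into the claimed form. The crucial structural fact I would use is that the collection events $\alpha_{i,u}[t]$ defined in~\eqref{alphauit} are independent across time intervals, so that $\mathbb{E}(1-\alpha_{i,u}[t]) = \overline{p}_{i,u}[t]$, and, more importantly, $A_{i,u}[t-1]$ — which by~\eqref{Aiu} is a function of $\alpha_{i,u}[1],\dots,\alpha_{i,u}[t-1]$ only — is independent of $\alpha_{i,u}[t]$. This independence is the one genuinely substantive ingredient; everything downstream is deterministic algebra.

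First I would take the expectation of both sides of~\eqref{Aiu}. Writing $a_t \triangleq \mathbb{E}(A_{i,u}[t])$ and using independence to split the expectation of the product, I obtain
\[
a_t = \mathbb{E}\big((A_{i,u}[t-1]+1)(1-\alpha_{i,u}[t])\big) = (a_{t-1}+1)\,\overline{p}_{i,u}[t],
\]
with $a_0 = 0$ from $A_{i,u}[0]=0$. This collapses the probabilistic statement into a single first-order linear recurrence with time-varying coefficients. Unrolling it from the initial condition produces a telescoping sum of nested products,
\[
a_t = \sum_{s=1}^{t} \prod_{j=s}^{t} \overline{p}_{i,u}[j],
\]
which I would then certify rigorously by induction on $t$: the base case $t=1$ gives $a_1 = \overline{p}_{i,u}[1]$, and the inductive step follows by substituting the hypothesis into the recurrence above. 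Factoring the common leading factor $\overline{p}_{i,u}[t]$ out of every summand and regrouping the remaining nested products rearranges this expression into the grouping displayed in~\eqref{AgeClosedForm}, with the constant term $1$ and the term $\overline{p}_{i,u}[t-1]$ appearing as the first two summands.

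The main obstacle I anticipate is bookkeeping rather than conceptual difficulty: one must reindex the telescoped sum of products carefully so that it matches the precise nested grouping $\overline{p}_{i,u}[t]\big(1 + \overline{p}_{i,u}[t-1] + \sum_{k}\prod_{j}\cdots\big)$ in the statement, and verify that the boundary contributions (the initial condition and the first two terms) are reproduced exactly. I would therefore carry out the induction in the factored form directly, so that the inductive step automatically emits the stated product structure rather than requiring a separate reconciliation step at the end.
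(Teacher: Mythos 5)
Your reduction follows, in substance, the same route as the paper's own proof: the paper's inductive step also rests on the independence of $\alpha_{i,u}[n+1]$ from $A_{i,u}[n]$ to write $\mathbb{E}\left((A_{i,u}[n]+1)(1-\alpha_{i,u}[n+1])\right)=\overline{p}_{i,u}[n+1]\left(\mathbb{E}(A_{i,u}[n])+1\right)$, which is exactly your recurrence $a_t=(a_{t-1}+1)\,\overline{p}_{i,u}[t]$; the only organizational difference is that the paper inducts directly on the claimed closed form, whereas you solve the recurrence first and plan to regroup afterwards. Writing $\overline{p}[j]$ as shorthand for $\overline{p}_{i,u}[j]$, your unrolled solution
\[
a_t=\sum_{s=1}^{t}\ \prod_{j=s}^{t}\overline{p}[j]
\]
is correct, and everything up to that point is rigorous.

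The genuine gap is precisely the step you deferred as ``bookkeeping'': the regrouping of this sum into the form displayed in~(\ref{AgeClosedForm}) cannot be carried out, because the two expressions are not equal. At $t=3$ your formula gives $\overline{p}[3]\left(1+\overline{p}[2]+\overline{p}[2]\,\overline{p}[1]\right)$, whereas the display in~(\ref{AgeClosedForm}) gives $\overline{p}[3]\left(1+\overline{p}[2]+\overline{p}[1]\,\overline{p}[0]+\overline{p}[0]\,\overline{p}[1]^{2}\,\overline{p}[2]\right)$: its products $\prod_{j=1}^{k}\overline{p}[j]\,\overline{p}[j-1]$ are anchored at $j=1$, contain repeated factors such as $\overline{p}[1]^{2}$, and involve the quantity $\overline{p}[0]$, which the lemma never defines. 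No convention rescues the identity: taking $\overline{p}[0]=0$, which the paper's own base case $\overline{p}[1](1+\overline{p}[0])=\overline{p}[1]$ tacitly requires, annihilates every product of length at least two and leaves $\overline{p}[3](1+\overline{p}[2])$, still different from your expression. So the induction you proposed to run ``in the factored form'' would be attempting to prove a false identity and would fail at the first nontrivial step. What your derivation actually exposes is that the lemma's displayed formula is mis-indexed; the correct closed form is yours, equivalently $\overline{p}[t]\left(1+\sum_{k=1}^{t-1}\prod_{j=t-k}^{t-1}\overline{p}[j]\right)$, and the paper's own proof glosses over the discrepancy (its summation limit silently shifts from $n-2$ in the hypothesis~(\ref{ASSU}) to $n$ after substitution). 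A correct write-up must either state and prove the corrected formula or explicitly flag the mismatch; the assertion that your sum ``rearranges into''~(\ref{AgeClosedForm}) is the one claim in your proposal that is false.
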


\begin{proof}
We prove the lemma by induction. 


\textbf{Base Case:} For $t = 1$, we have
\begin{equation*}
\begin{aligned}
& \mathbb{E}(A_{i,u}[t])\!= \overline{p}_{i,u}[1]\! \left(\!1 \!+ \!\overline{p}_{i,u}[0]\!\right)=\overline{p}_{i,u}[1],
\end{aligned}
\end{equation*}
which matches the derived expression.

\textbf{Inductive Step:} Let us assume that the lemma holds for $t = n$, $1<n<T-1$ i.e.,
\begin{equation}\label{ASSU}
\begin{aligned}
& \mathbb{E}(\!A_{i,u}[n]\!)\!=\!\overline{p}_{i,u}[n]\! \left(\!1\! +\! \overline{p}_{i,u}[n-1] \!+\!\sum_{k=1}^{n-2} \left(\!\prod_{j=1}^{k}\!\overline{p}_{i,u}[j]\overline{p}_{i,u}[j-1]\!\!\right)\right).
\end{aligned}
\end{equation}

In the following, we prove that it holds for $t = n+1$.

\begin{equation}
\begin{aligned}
   \mathbb{E}(A_{i,u}[n+1]) &= \mathbb{E}((A_{i,u}[n]+1)(1-\alpha_{i,u}[n+1])\\
   &=(1-p_{i,u}[n+1])\mathbb{E}(A_{i,u}[n])+1-p_{i,u}[n+1].
\end{aligned}
\end{equation}
Using our assumption in equation~(\ref{ASSU}), we replace $\mathbb{E}(A_{i,u}[n])$ by its expression and obtain

\begin{equation*}
\begin{aligned}
   \mathbb{E}(A_{i,u}[n+1]) 
   &=\overline{p}_{i,u}[n+1]\overline{p}_{i,u}[n] (1\! + \overline{p}_{i,u}[n-1] \\&+\!\sum_{k=1}^{n} \left(\!\prod_{j=1}^{k}\!\overline{p}_{i,u}[j]\overline{p}_{i,u}[j-1]\!\!\right))+\overline{p}_{i,u}[n+1]).
\end{aligned}
\end{equation*}

Finally, by arranging the expression above, we obtain

\begin{equation*}
\begin{aligned}
& \mathbb{E}(A_{i,u}[n+1])= \overline{p}_{i,u}[n+1] \left(1 + \overline{p}_{i,u}[n] \right.\\
&+\sum_{k=1}^{n} \left(\prod_{j=1}^{k}\overline{p}_{i,u}[j]\overline{p}_{i,u}[j-1]\right)).
\end{aligned}
\end{equation*}

Therefore, the lemma holds for $t = n+1$. By induction, the lemma is proven for all $t \geq 1$.
\end{proof}
From equation~(\ref{AgeClosedForm}), we can observe that when $p_{i,u}[t]=1$ (or equivalently $\overline{p}_{i,u}[t]=0$) for all $t\in \mathcal{T}$, i.e., the data is collected from user $i$ by UAV $u$ for all time intervals, the corresponding expected AoI becomes zero. Conversely, when $p_{i,u}[t]=0$, i.e., no data has been collected over the considered time, the expected AoI related to user $i$ and UAV $u$ reaches its maximum value which is equal to $T$. 
\section{Problem Formulation}
\label{Prob}
The objective of this work is to minimize the expected AoI across devices during a number of time intervals $T$. The optimization problem involves finding the optimal probabilities $\boldsymbol{p}$ of selecting devices to collect data updates and the stopping points $(\boldsymbol{x},\boldsymbol{y}$) of UAVs over time, while considering various constraints. Accordingly, our problem is formulated as follows

 
\begin{mini!}
{\boldsymbol{p},\boldsymbol{x},\boldsymbol{y}} {\!\!\sum\limits_{\substack{(t,u,i) \in \\ \mathcal{T}\times \mathcal{U}\times \mathcal{I}}}\!\!\!\!\!\!\overline{p}_{i,u}[t]\!\! \left(\!\!1\!\!+\! \overline{p}_{i,u}[t-1] \!+\!\!\sum_{k=1}^{t-1}\!\! \left(\!\prod_{j=1}^{k}\!\overline{p}_{i,u}[j]\overline{p}_{i,u}[j-1]\!\!\!\right)\!\!\!\right)
\label{objective}}
{\label{GeneralOptimizati}}{}
\addConstraint{R_{i,u}[t]\geq p_{i,u}[t]R^{\rm min},\;  \forall (t,u,i) \in \mathcal{T}\!\!\!\times \mathcal{U}\!\!\times \mathcal{I} \label{Rate}}{}{}
\addConstraint{ 
\sum \limits_{u\in \mathcal{U}}p_{i,u}[t]\leq 1, \; \forall (t,i) \in \mathcal{T}\!\!\times \mathcal{I}
\label{Association2}}
{}{}
\addConstraint{ 
\sum \limits_{i\in \mathcal{I}}p_{iu}[t]\leq N_u, \;  \forall (t,u) \in \mathcal{T}\!\!\!\times \mathcal{U}
\label{CapacityUAV}}
{}{}
\addConstraint{ 
\zeta_{u}(\!\boldsymbol{x}_u,\boldsymbol{y}_u\!) \leq T, \;  \forall u \in \mathcal{U}
\label{TimeUAV}}
{}{}
\addConstraint{ 
0\leq x_u[t]\leq x^{\rm max}, \;  \forall (t,u) \in \mathcal{T}\!\!\!\times \mathcal{U}
\label{xposition}}
{}{}
\addConstraint{ 
0\leq y_u[t]\leq y^{\rm max}, \;  \forall (t,u) \in \mathcal{T}\!\!\!\times \mathcal{U}
\label{yposition}}
{}{}
\addConstraint{ 
0\leq  p_{i,u}[t]\leq 1, \;  \forall i \in \mathcal{I}, \forall (t,u) \in \mathcal{T}\!\!\!\times \mathcal{U}
\label{alpha}}
{}{}
\end{mini!}

 Constraint~(\ref{Rate}) ensures that the expected rate between each UAV and its served IoT device is above a predefined threshold $R^{\min}$. Constraint (\ref{Association2}) guarantees that a device can transmit to at most one UAV at a time, on average. Similarly, constraint (\ref{CapacityUAV}) ensures that the expected number of served devices by UAV $u$ does not exceed its maximum capacity $N_u$. Constraint~(\ref{TimeUAV}) guarantees that each UAV $u$ adheres to a maximum flight time, denoted as $\zeta_{\text{max}}^u$, which aligns with its energy budget. Constraints (\ref{xposition}) and (\ref{yposition}) limit UAVs' movements to a specific area. Constraint (\ref{alpha}) bounds the probabilities of device selection between $0$ and $1$.

 Solving the expected AoI minimization is challenging due to the non-convexity of both the objective function and constraints~(\ref{Rate}) and~(\ref{TimeUAV}). To address this problem, we leverage the power of EDNNs. EDNNs take advantage of the impressive ability of DNN to approximate highly complex functions. Specifically, EDNN is a collection of DNNs trained with different initial weights and training data. Each DNN model in the ensemble is individually trained and stored. During the test, the DNNs' results are combined using an aggregation rule (e.g., averaging). 
 
 In the next section, we first explain how a single DNN model can efficiently solve the expected AoI minimization, then, we describe how the EDNN solution is leveraged to provide accurate results.



\section{Ensemble Deep Neural Networks based Approach  }\label{Ensemble}

To address the constrained AoI problem, an alternative approach is to solve its primal-dual formulation. In fact, while the optimal solution of the dual problem may not necessarily be the optimal solution for the original AoI minimization (due to the non-convexity of the problem), it can still offer an efficient local optimum. Specifically, the Lagrangian function for the problem under study is defined in (\ref{LossFunction}).

\begin{figure*}
    \begin{equation} 
\begin{aligned}
L(\boldsymbol{p},\boldsymbol{x},\boldsymbol{y},\boldsymbol{\mu})\!=&\sum \limits_{\substack{(i, u,t)\in\\\mathcal{I}\times \mathcal{U}\times \mathcal{T}}}\!\!\!\mathbb{E}(A_{i,u}[t])\!\!+\!\!\left(\!\sum \limits_{\substack{(i, u,t)\in\\\mathcal{I}\times \mathcal{U}\times \mathcal{T}}} \!\!\!\mu_{i,u,t}^1 C_{i,u,t}^1\!\right)\! +\!\!\left(\!\sum \limits_{\substack{(i, t)\in\\\mathcal{I}\times \mathcal{T}}}\!\! \mu_{i,t}^2 C_{ i,t}^2\!\right)
 \left.+\left(\!\sum \limits_{\substack{( u,t)\in\\\mathcal{U}\times \mathcal{T}}} \mu_{u,t}^3 C_{ u,t}^3\!\right)\!
 \right.+\left.\!\left(\!\sum \limits_{u\in \mathcal{U}} \mu_{u}^4 C_{ u}^4\!\right)\!+\!\left(\sum \limits_{\substack{( u,t)\in\\ \mathcal{U}\times \mathcal{T}}} \mu^5_{u,t} C_{u,t}^5\!\right)\right.
 \\&\left.\!+\!\left(\!\sum \limits_{\substack{(i, u,t)\in\\\mathcal{I}\times \mathcal{U}\times \mathcal{T}}} \mu_{i,u,t}^6 C_{ i,u,t}^6\!\right)\right.
 \!+\!\left(\!\sum \limits_{\substack{(i,u,t)\in\\\mathcal{I}\times \mathcal{U}\times \mathcal{T}}} \mu_{i,u,t}^7C_{ i,u,t}^7\!\right).
 \end{aligned}
 \label{LossFunction}
\end{equation}
\end{figure*}

$(C_{.}^j)_{j=1}^7$ in~ (\ref{LossFunction}) captures the constraints of the problem, which are expressed as follows $C_{i,u,t}^1=\operatorname{ReLU}\left(R^{\rm min}-p_{i,u}[t]R_{i,u}[t]\right)$, $C_{i, t}^2=\operatorname{ReLU}(\sum \limits_{u\in \mathcal{U}}p_{i,u}[t]- 1)$, $C_{ u,t}^3=\operatorname{ReLU}(\sum \limits_{i \in \mathcal{U}}p_{i,u}[t]-N_u)$, $C_{ u}^4=\operatorname{ReLU}\left(\zeta(\boldsymbol{x}_u,\boldsymbol{y}_u)-\zeta^{\rm max}\right)$,
$C_{ u,t}^5=\operatorname{ReLU}\left(x_{u}[t]-x^{\rm max}\right)$,
$C_{ u,t}^6=\operatorname{ReLU}\left(y_{u}[t]-y^{\rm max}\right)$,
$C_{i, u,t}^7=\operatorname{ReLU}\left(p_{iu}[t]-1\right)$, where $\operatorname{ReLU}(x)=\max(0,x)$, is the rectified linear function, and $(\boldsymbol{\mu}^j_{.})_{j=1}^7$ are the non-negative Lagrange multipliers. Accordingly, an alternative formulation to solve the expected AoI minimization problem is given by

\begin{equation}\label{lagr}
    \max \limits_{\{\boldsymbol{\mu}^j\}} \min \limits_{\boldsymbol{x},\boldsymbol{y},\boldsymbol{p}} L(\boldsymbol{p},\boldsymbol{x},\boldsymbol{y},\boldsymbol{\mu}).
\end{equation}

\begin{algorithm}[t]
\caption{EDDN for AoI Minimization}\label{alg:3}
\begin{algorithmic}[1]
\Statex{\textbf{Training phase :}}
\State{ Let $N$ be the number of models in the EDNN.}
\State{
Generate the training data and split it into $N$ datasets}
\FOR{$i=1,\dots,N$}
    \State{Initialize the weights of the $i$th DNN model  $\boldsymbol{w}$ and the Lagrange multipliers $(\boldsymbol{\mu}^j_{.})_{j=1}^7$}
    \FOR{Iterations of stochastic gradients}
    \State{Select a mini-batch of data and use gradient descent  to update the weights of the DNN as follows $\boldsymbol{w}^{t+1}=\boldsymbol{w}^{t}-\eta \frac{\partial \hat{L}(\boldsymbol{w},\boldsymbol{\mu})}{\partial w}$} in order to get $\boldsymbol{x}^{t},\boldsymbol{y}^{t},\boldsymbol{p}^{t}$ .
    \State{ Update the Lagrangian multiplier as follow
    ${(\boldsymbol{\mu}^j)}^{t+1}={(\boldsymbol{\mu}^j)}^t+\beta \frac{\partial \hat{L}(\boldsymbol{w},\boldsymbol{\mu})}{\partial\mu^j}={(\boldsymbol{\mu}^j)}^t+\beta \left(\hat{C}^j\right)$ $\forall j \in {1,\dots,7}$.
    }
\ENDFOR
\ENDFOR
\Statex{\textbf{Testing phase}}
\State{Generate the testing data.}
\FOR{$i$ in $N$}
    \State{Output $\boldsymbol{x}^i$, $\boldsymbol{y}^i$, and $\boldsymbol{p}^i$ (the output vectors of DNN $i$).}
    \State{Compute the expected AoI denoted as $AoI_i$ using (\ref{AgeClosedForm}).}
\ENDFOR
\State{Compute the final output vectors from all DNN models
   $\boldsymbol{x} = \frac{\sum_{i=1}^{N} AoI_i \cdot \boldsymbol{x}^i }{\sum_{i=1}^{N} AoI_i}$,
   $ \boldsymbol{y} = \frac{\sum_{i=1}^{N} AoI_i \cdot \boldsymbol{y}^i}{\sum_{i=1}^{N} AoI_i} $,
    $\boldsymbol{p} = \frac{\sum_{i=1}^{N} AoI_i \cdot \boldsymbol{p}^i }{\sum_{i=1}^{N} AoI_i}$.}
\end{algorithmic}
\end{algorithm}

To solve problem~(\ref{lagr}), we leverage the ability of DNN to approximate complex functions. To this end, the data collection probabilities and the scheduling of UAVs locations are modeled as an output of a DNN. Specifically, \begin{equation}
(\boldsymbol{x},\boldsymbol{y},\boldsymbol{p})\triangleq f(\boldsymbol{w};\theta),
\end{equation}
where $\boldsymbol{w}$ is the DNN's vector of weights, $\theta$ is the input vector composed of environment parameters (e.g., the transmit powers, bandwidth, channel gains, etc) and $f(.)$ is the DNN model. Hence, to find the optimal data collection probabilities and effectively schedule their locations over time, we adopt an unsupervised learning approach. This approach differs from traditional supervised learning, where the DNN's training depends on a numerical algorithm to solve the optimization problem. Instead, we utilize the Lagrangian of the optimization problem as a cost function to train the DNN using an unsupervised learning. Moreover, to optimize the Lagrangian multipliers, we employ a gradient ascent optimization. 

Accordingly, at the $t^{\rm th}$ iteration, the variables of problem~(\ref{lagr}) are optimized using stochastic gradient descent and gradient ascent as follows  
\begin{equation}
\boldsymbol{w}^{t+1}=\boldsymbol{w}^{t}-\eta \frac{\partial \hat{L}(\boldsymbol{w},\boldsymbol{\mu})}{\partial w},
\end{equation}
and for all $j \in \{1,\dots,7\}$
\begin{equation}{(\boldsymbol{\mu}^j)}^{t+1}={(\boldsymbol{\mu}^j)}^t+\beta \frac{\partial \hat{L}(\boldsymbol{w},\boldsymbol{\mu})}{\partial\mu^j}={(\boldsymbol{\mu}^j)}^t+\beta \left(\hat{C}^j\right),\end{equation}
where $\boldsymbol{w}^t$ and ${(\boldsymbol{\mu}^j)}^t$ are the vectors of weights and Lagrangian multipliers at the $t^{\rm th}$ iteration, respectively. $\hat{L}(\boldsymbol{w},\boldsymbol{\mu})$ is the expected value of the Lagrangian function applied to a batch of input data. Similarly, $\hat{C}^j$ is the expected value of the $j^{\rm th}$ constraint applied to a batch of input data. Finally, $\eta$ and $\beta$ are the learning rates of stochastic gradient descent and ascent, respectively. 

 \begin{figure*}[t]
\centering
\minipage{0.29\textwidth}
  \includegraphics[width=1\linewidth]{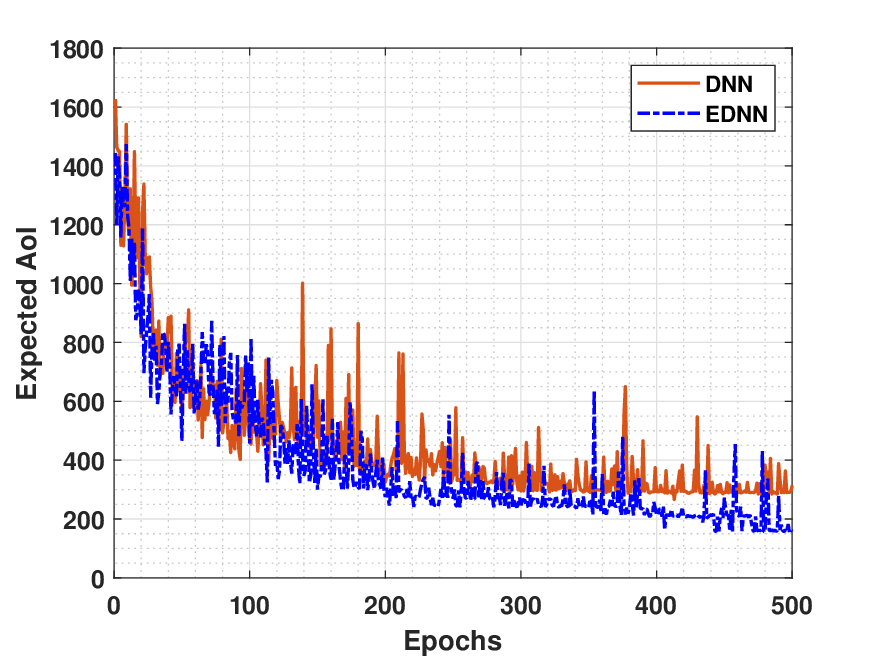}
    \caption{Expected AoI vs epochs}
    \label{fig:AoIvsepoch}
\endminipage\hfill
\minipage{0.29\textwidth}
  \includegraphics[width=1\linewidth]{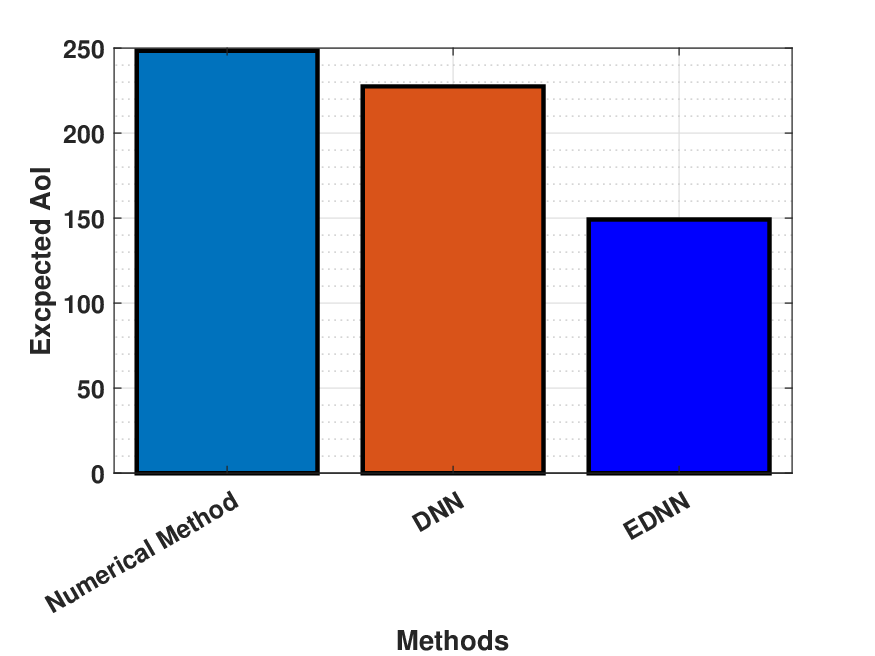}
    \caption{Expected AoI during testing phase}
    \label{fig:AoItestphase}
\endminipage \hfill
\minipage{0.29\textwidth}
  \includegraphics[width=1\linewidth]{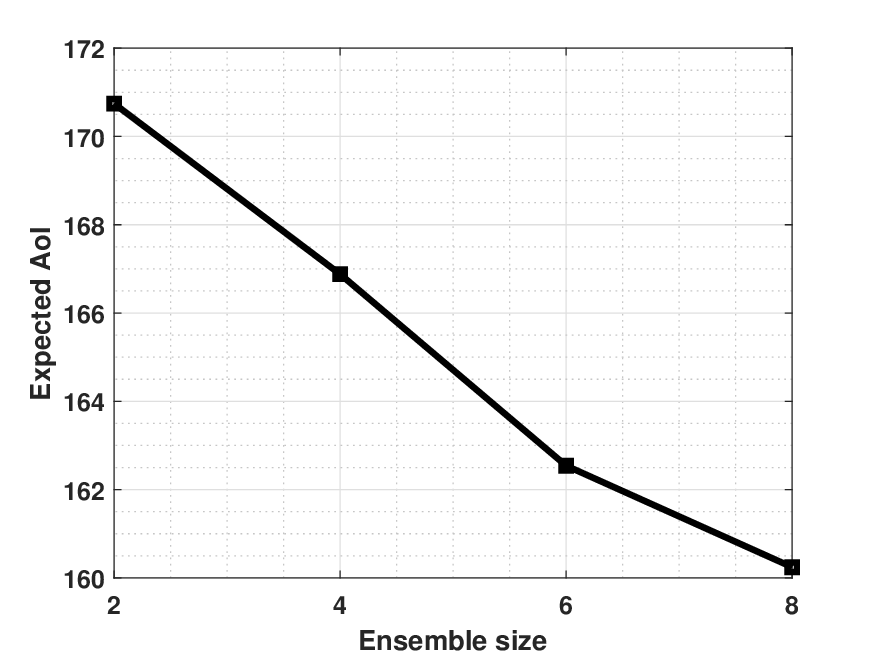}
    \caption{Average expected AoI vs ensemble size}
    \label{fig:aoivsednnsize}
\endminipage
\end{figure*}

The DNN is trained with the aim to output the optimal UAV positions $\boldsymbol{x}$, $\boldsymbol{y}$, and data collection probabilities $\boldsymbol{p}$. To ensure that each UAV's position remains within the target area, a $\operatorname{ReLU}$ function is applied to outputs related to UAVs' 2D positions at the output layer. Similarly, a $\operatorname{Sigmoid}$ function is applied to the outputs related to data collection probabilities. These activation functions guarantee that the DNN's outputs are bounded within the specified intervals.

To enhance the generalization performance and ensure the robustness of the proposed DNN, we leverage the framework of EDNNs.
Compared to DNNs, EDNNs combine multiple DNN models into an ensemble, which leads to enhanced accuracy and robustness~\cite{ganaie2022ensemble}. In the context of AoI minimization, the EDNN will improve the generalization ability of the model for unseen scenarios and handle the uncertainty of the wireless environment. In the following, we describe how the EDNN is efficiently trained and tested. The description of the proposed approach is provided in algorithm~\ref{alg:3}.

\begin{enumerate}
    \item \textbf{Training EDNN}: We implement an EDNN structure in which the DNN models within the ensemble share the same architecture. However, each DNN is initialized and trained using different initial weights and training sets.
In fact, to achieve an efficient training of EDNNs and avoid overfitting,
 it is important to ensure a minimal overlap in the datasets used to train each DNN within the ensemble.
For the studied AoI problem, the input data is composed of the channel gains, the bandwidth allocations, the transmit power, and the locations of IoT devices. The data is generated randomly and is equally divided between the DNNs within the ensemble. Then, each DNN is initialized randomly. At each iteration of the training, a mini-batch is randomly selected to perform gradient descent and ascents updates. 

     \item \textbf{Testing EDNN}:
During the testing phase, the test set is drawn from the same distribution as the training data. 
Each trained DNN is provided with the test data and produces the UAVs' scheduling and data collection probabilities. The final output of the EDNN is computed by taking a weighted average of all the output vectors, where the weights are proportional to the AoI provided by each DNN within the ensemble.
\end{enumerate}

It is important to note that due to the computational complexity of the training and the relatively small gain 
 in performance that comes from adding multiple DNNs, the number of models in EDNN is kept small (generally up to $10$).


%
\section{Simulation Results}\label{Simu}

To evaluate the performance of the proposed approach, we consider an area of $1000m\times1000m$, where a number of $30$ IoT devices are randomly scattered. We also suppose that $3$ UAVs are deployed to collect and keep the data as fresh as possible. The UAVs hover at altitudes between $80m$ and $100m$.  Moreover, the devices are assigned a fixed bandwidth, randomly picked between $[1.5,2]$ GHz and a constant power between $[0,1]$ mWatt. To satisfy the quality of service constraint, the minimum rate is set to $150$ Kbit/s.

The parameters of our simulation setup are summarized in Table \ref{tab:my-table-parameter}.

\begin{table}[ht]
\centering
\begin{tabular}{|l|l||l|l|}
\hline
\textbf{Parameter} & \textbf{Value} & \textbf{Parameter} & \textbf{Value}   \\ \hline
$I$       & $30$      &$U$       & $3$  \\ \hline
$x_{max}$       & $1000m$ & $y_{max}$       & $1000m$  \\ \hline
$H_u$       & $[80,100]m$   & $R_{min}$       & $150$ Kbit/s  \\ \hline
$N_u$       & $8$   &$T$       & $40$   \\ \hline
$B_{i,u}$       & $[1.5,2]$ GHz   &$\sigma^{2}$       & $-120dBm$  \\ \hline
$P_{i}$       & $[0,1]$ mW   &$T$       &$40$ \\ \hline
\end{tabular}
\caption{Experiment setup}
\label{tab:my-table-parameter}
\end{table}

The mini-batch size and the gradient descent learning rate are taken as 50 and $0.001$. At each iteration, the number of epochs is 150. Moreover, the step size $\beta$ of updating the penalty parameters is set to $0.1$. For each single DNN, the number of neurons from the input layer to the output layer is given as $\{600,1200,2400,4800\}$. Finally, the ensemble size is set to $8$. 


In Fig.~ \ref{fig:AoIvsepoch}, we observe a consistent reduction in the expected AoI for both DNN and EDNN during the training phase. Moreover, it can be seen through the figure that by the end of the training the EDNN achieves a substantial decrease in the average AoI. These results are confirmed through the testing phase as illustrated by Fig.~\ref{fig:AoItestphase}. Specifically, Fig.~\ref{fig:AoItestphase} plots the expected AoI in the test and compare it with DNN and  a numerical method (based on the interior- point algorithm). As it can be seen through the figure, the EDNN approach outperforms DNN and the numerical method  as it achieves a reduction of approximately $29.5\%$ compared to DNN, and a reduction of approximately $35.5\%$ compared to the numerical method.

In Figure \ref{fig:aoivsednnsize}, we investigate the impact of the ensemble size in EDNN on the achieved expected AoI during the test. It can be seen from the figure that as the ensemble size increases, the expected AoI is further minimized, which indicates the potential for even better performance with larger ensemble sizes.






\section{Conclusion}\label{Conc}

In this paper, we studied the problem of AoI minimization in UAV-assisted networks. Specifically, we proposed an EDNN based approach to efficiently schedule the 2D positions over time and optimize the probabilities of selection. The EDNN is trained using an unsupervised learning which relies on the minimization of the Lagrangian function of the studied problem. Our simulation results show that the proposed approach outperforms traditional DNN in minimizing the AoI.

\section*{Acknowledgment}
This document has been produced with the financial assistance of the European Union (Grant no. DCI-PANAF/2020/420-028), through the African Research Initiative for Scientific Excellence (ARISE), pilot programme. ARISE is implemented by the African Academy of Sciences with support from the European Commission and the African Union Commission.






\balance
\bibliographystyle{IEEEtran}
\bibliography{IEEEabrv,biblio_traps_dynamics}

\begin{thebibliography}{10}
\providecommand{\url}[1]{#1}
\csname url@samestyle\endcsname
\providecommand{\newblock}{\relax}
\providecommand{\bibinfo}[2]{#2}
\providecommand{\BIBentrySTDinterwordspacing}{\spaceskip=0pt\relax}
\providecommand{\BIBentryALTinterwordstretchfactor}{4}
\providecommand{\BIBentryALTinterwordspacing}{\spaceskip=\fontdimen2\font plus
\BIBentryALTinterwordstretchfactor\fontdimen3\font minus
  \fontdimen4\font\relax}
\providecommand{\BIBforeignlanguage}[2]{{%
\expandafter\ifx\csname l@#1\endcsname\relax
\typeout{** WARNING: IEEEtran.bst: No hyphenation pattern has been}%
\typeout{** loaded for the language `#1'. Using the pattern for}%
\typeout{** the default language instead.}%
\else
\language=\csname l@#1\endcsname
\fi
#2}}
\providecommand{\BIBdecl}{\relax}
\BIBdecl

\bibitem{yates2021age}
Yates,~R.~D., Sun,~Y., Brown,~D.~R., Kaul,~S.~K., Modiano,~E., and Ulukus,~S.,
  ``Age of information: An introduction and survey,'' \emph{IEEE Journal on
  Selected Areas in Communications}, vol.~39, no.~5, pp. 1183--1210, 2021.

\bibitem{kadota2021age}
Kadota,~I. and Modiano,~E., ``Age of information in random access networks with
  stochastic arrivals,'' in \emph{IEEE Conference on Computer Communications
  (INFOCOM)}, 2021, pp. 1--10.

\bibitem{yang2020age}
Yang,~H.~H., Arafa,~A., Quek,~T.~Q., and Poor,~H.~V., ``Age-based scheduling
  policy for federated learning in mobile edge networks,'' in \emph{ICASSP
  2020-2020 IEEE International Conference on Acoustics, Speech and Signal
  Processing (ICASSP)}.\hskip 1em plus 0.5em minus 0.4em\relax IEEE, 2020, pp.
  8743--8747.

\bibitem{wang2020priority}
Wang,~N., Xin,~Y., Zheng,~J., Wang,~J., Liu,~X., Hou,~X., and Liu,~Y.,
  ``Priority-oriented trajectory planning for {UAV}-aided time-sensitive {IoT}
  networks,'' in \emph{2020 IEEE International Conference on Communications
  Workshops (ICC Workshops)}.\hskip 1em plus 0.5em minus 0.4em\relax IEEE,
  2020, pp. 1--7.

\bibitem{bajracharya20226g}
Bajracharya,~R., Shrestha,~R., Kim,~S., and Jung,~H., ``{6G} {NR-U} based
  wireless infrastructure {UAV}: Standardization, opportunities, challenges and
  future scopes,'' \emph{IEEE Access}, vol.~10, pp. 30\,536--30\,555, 2022.

\bibitem{wei2022uav}
Wei,~Z., Zhu,~M., Zhang,~N., Wang,~L., Zou,~Y., Meng,~Z., Wu,~H., and Feng,~Z.,
  ``{UAV}-assisted data collection for internet of things: A survey,''
  \emph{IEEE Internet of Things Journal}, vol.~9, no.~17, pp. 15\,460--15\,483,
  2022.

\bibitem{abd2018average}
Abd-Elmagid,~M. and H.~S~Dhillon,~H., ``Average peak age-of-information
  minimization in {UAV}-assisted {IoT} networks,'' \emph{IEEE Transactions on
  Vehicular Technology}, vol.~68, no.~2, pp. 2003--2008, 2018.

\bibitem{gao2023aoi}
X.~Gao,~X., Zhu,~X., and Zhai,~L., ``{AoI}-sensitive data collection in
  multi-{UAV}-assisted wireless sensor networks,'' \emph{IEEE Transactions on
  Wireless Communications}, 2023.

\bibitem{ndiaye2022age}
Ndiaye,~M.~N., Bergou,~E., Ghogho,~M., and Hammouti,~H.~E., ``Age-of-updates
  optimization for {UAV}-assisted networks,'' in \emph{IEEE Global
  Communications Conference}, 2022, pp. 450--455.

\bibitem{arulkumaran2017deep}
Arulkumaran,~K., Deisenroth,~M.~P., Brundage,~M., and Bharath,~A.~A., ``Deep
  reinforcement learning: A brief survey,'' \emph{IEEE Signal Processing
  Magazine}, vol.~34, no.~6, pp. 26--38, 2017.

\bibitem{sun2021aoi}
Sun,~M., Xu,~X., Qin,~X., and Zhang,~P., ``{AoI}-energy-aware uav-assisted data
  collection for {IoT} networks: A deep reinforcement learning method,''
  \emph{IEEE Internet of Things Journal}, vol.~8, no.~24, pp. 17\,275--17\,289,
  2021.

\bibitem{liu2021average}
Liu,~L., Xiong,~K., Cao,~J., Lu,~Y., Fan,~P., and Letaief,~K.~B., ``Average
  {AoI} minimization in {UAV}-assisted data collection with {RF} wireless power
  transfer: A deep reinforcement learning scheme,'' \emph{IEEE Internet of
  Things Journal}, vol.~9, no.~7, pp. 5216--5228, 2021.

\bibitem{zhu2022uav}
Zhu,~B., Bedeer,~E., Nguyen,~H.~H., Barton,~R., and Gao,~Z., ``{UAV} trajectory
  planning for {AoI}-minimal data collection in {UAV}-aided {IoT} networks by
  transformer,'' \emph{IEEE Transactions on Wireless Communications}, vol.~22,
  no.~2, pp. 1343--1358, 2022.

\bibitem{naby2023muti}
Ndiaye,~M.~N., Bergou,~E., and Hammouti,~H.~E., ``Muti-agent proximal policy
  optimization for data freshness in {UAV}-assisted networks,'' in \emph{IEEE
  Communication Conference workshops}, 2023.

\bibitem{ganaie2022ensemble}
Ganaie,~M.~A., Hu,~M., Malik,~A., Tanveer,~M., and Suganthan,~P., ``Ensemble
  deep learning: A review,'' \emph{Engineering Applications of Artificial
  Intelligence}, vol. 115, p. 105151, 2022.

\end{thebibliography}

\end{document}